\documentclass[11pt]{article}

\usepackage[a4paper,margin=1in]{geometry}
\usepackage[T1]{fontenc}
\usepackage[utf8]{inputenc}
\usepackage{amsmath,amssymb,amsthm,mathtools}
\usepackage{hyperref}
\usepackage[nameinlink,capitalize]{cleveref}

\newcommand{\R}{\mathbb{R}}
\newcommand{\supp}{\operatorname{supp}}
\newtheorem{theorem}{Theorem}[section]
\newtheorem{proposition}[theorem]{Proposition}
\newtheorem{lemma}[theorem]{Lemma}
\newtheorem{corollary}[theorem]{Corollary}
\theoremstyle{definition}
\newtheorem{example}[theorem]{Example}

\theoremstyle{remark}
\newtheorem{remark}[theorem]{Remark}

\title{Small-Data Lifespan for a One-Dimensional Wave Equation with Mixed Characteristic--Time Derivative Source}

\author{
Firas Kaabi\\
\small Department of Mathematics, Faculty of Sciences of Tunis,\\[-2pt]
\small University of Tunis El Manar, LR Analyse Non-Lin\'eaire et G\'eom\'etrie, LR21ES08,\\[-2pt]
\small El Manar 2, 2092 Tunis, Tunisia\\[-2pt]
\small \texttt{firaskaabi17@gmail.com}
}

\date{}

\begin{document}

\maketitle

\begin{abstract}
We study the lifespan of classical solutions to
\[
v_{tt}-v_{xx}=|v_t+v_x|^m|v_t|^n,
\qquad x\in\R,\quad t>0,
\]
where \(m>1\) and \(n>1\). For compactly supported data \((\eta\phi,\eta\psi)\), with \(\phi\in C_0^2(\R)\) and \(\psi\in C_0^1(\R)\), we prove the two-sided estimate
\[
c\eta^{-(m+n-1)}
\leq
T(\eta)
\leq
C\eta^{-(m+n-1)}
\]
under the single one-sided assumption \(\psi-\phi'\geq0\) on \(\R\), the data being nontrivial. The lower bound is obtained from the characteristic integral system and holds without any sign restriction; the upper bound follows from a scalar superlinear inequality along a selected characteristic. A short argument shows that the sign assumption already forces \(\psi(x_0)+\phi'(x_0)>0\) at some point, so that no separate activation hypothesis is needed. We also show that the compatible cancellation condition \(\psi+\phi'\equiv0\) produces the global free wave \(v(x,t)=\eta\phi(x-t)\), and that this cancellation regime meets the sign assumption only for trivial data. The model therefore separates a cancellation regime from a finite-time amplification regime with an exactly determined lifespan scale.
\end{abstract}

\noindent\textbf{Keywords:} semilinear wave equation; derivative-type nonlinearity; characteristic derivative; wave propagation; finite-time blow-up; sharp lifespan; characteristic variables.

\noindent\textbf{MSC2020:} 35L71; 35B44; 35B30; 35L05.

\section{Introduction}

For nonlinear wave equations, the time of classical existence is influenced not only by the size of the initial disturbance, but also by the way the derivatives enter the nonlinear source. A small wave can remain regular for a long time, but a derivative feedback may amplify it and lead to finite-time loss of regularity. This issue appears naturally in one-dimensional propagation models, in which both the local velocity and the direction of travel may affect the response. The question addressed here is how a mixed derivative source interacts with the two characteristic directions of the wave operator. Lifespan estimates and blow-up phenomena for nonlinear wave equations have been studied in many settings; see, for instance, \cite{John1979,Sideris1984,Rammaha1987,John1990,Alinhac1995,Hormander1997,LiYuZhou1991,LiYuZhou1992,Zhou2001,Takamura2015}, and \cite{Takamura2023survey} for a survey of the one-dimensional theory.

In one space dimension, the role of characteristics is especially transparent. The operators
\[
\partial_t+\partial_x,
\qquad
\partial_t-\partial_x
\]
represent the two travelling wave families of the linear equation. A source involving \(v_t\), \(v_x\), or a combination such as \(v_t\pm v_x\), may therefore emphasize one travelling component and generate a specific growth mechanism. This paper studies that directional effect for a mixed derivative model and determines the corresponding lifespan.

We study the Cauchy problem
\begin{equation}
\begin{cases}
v_{tt}-v_{xx}=|v_t+v_x|^m|v_t|^n,
& x\in\R,\ t>0,\\
v(x,0)=\eta\phi(x),
& x\in\R,\\
v_t(x,0)=\eta\psi(x),
& x\in\R,
\end{cases}
\label{eq:main}
\end{equation}
where
\[
m>1,\qquad n>1,
\]
and
\[
\phi\in C_0^2(\R),
\qquad
\psi\in C_0^1(\R).
\]
Here \(\eta>0\) measures the amplitude of the data. The right-hand side of \eqref{eq:main} couples the directional derivative \(v_t+v_x\) with the time derivative \(v_t\). Thus the nonlinear feedback depends both on one travelling component and on the local velocity. In this sense, \eqref{eq:main} isolates a directional amplification mechanism in a one-dimensional wave model.

The problem is related to the classical theory of derivative-type wave equations. Basic examples are
\[
v_{tt}-v_{xx}=|v_t|^p,
\qquad
v_{tt}-v_{xx}=|v_x|^p,
\]
as well as product-type nonlinearities of the form
\[
v_{tt}-v_{xx}=|v_t|^p|v_x|^q.
\]
Recent work has treated one-dimensional derivative nonlinearities, spatial derivative terms, product-type sources, characteristic weights, and related combined effects; see \cite{Sasaki2018,KitamuraMorisawaTakamura2023,SasakiTakamatsuTakamura2023,MorisawaSasakiTakamura2023,KidoSasakiTakamatsuTakamura2024,Kitamura2026,HaruyamaSasakiTakamura2025,HaruyamaTakamura2025}.

The position of \eqref{eq:main} within this family is best explained through the recent work of Haruyama, Sasaki and Takamura \cite{HaruyamaSasakiTakamura2025} on derivative nonlinearities of product type. For the product source \(|v_t|^p|v_x|^q\) with \(p,q>1\), they establish the lower lifespan bound of order \(\varepsilon^{-(p+q-1)}\) in the amplitude \(\varepsilon\) of the data, the scale dictated by the total degree of the source, and they point out that the matching upper bound for this general product case is still open: the blow-up proof for the pure time-derivative source \(|v_t|^p\) \cite{Zhou2001} rests on pointwise comparison with an ordinary differential equation, the one for the pure space-derivative source \(|v_x|^q\) \cite{SasakiTakamatsuTakamura2023} on an ordinary differential inequality for a weighted functional of the solution in the spirit of Rammaha \cite{Rammaha1995,Rammaha1997}, and the two mechanisms do not combine for the product. Blow-up is available, by contrast, for the purely characteristic model \(|v_t\pm v_x|^{p-1}(v_t\pm v_x)\), also treated in \cite{HaruyamaSasakiTakamura2025}: that source depends on a single characteristic component and therefore integrates into an ordinary differential equation along the corresponding characteristic family. The source in \eqref{eq:main} sits between these two structures. The factor \(|v_t+v_x|^m\) is purely characteristic, while the factor \(|v_t|^n\) mixes the two characteristic components, exactly as in the general product. The observation exploited in this paper is that the mixed factor can nevertheless be closed from below: under the single one-sided condition \(\psi-\phi'\geq0\), the component \(v_t-v_x\) remains nonnegative for all time, the velocity \(v_t\) then dominates half of the characteristic component \(v_t+v_x\), and the exact equation for \(v_t+v_x\) collapses into a scalar superlinear differential inequality along one characteristic. This closure, which is not presently available in the known arguments for the generic product \(|v_t|^p|v_x|^q\), produces an upper bound with the same power of the amplitude as the lower one. The aim of the paper is thus to isolate this characteristic--time mechanism and to determine its exact lifespan scale.

The role of the directional factor can already be seen for the free wave equation. If
\[
v(x,t)=A_+(x+t)+A_-(x-t),
\]
then
\[
v_t+v_x=2A_+'(x+t),
\qquad
v_t-v_x=-2A_-'(x-t).
\]
Thus \(v_t+v_x\) measures one travelling component, whereas \(v_t\) measures local temporal motion. The product
\[
|v_t+v_x|^m|v_t|^n
\]
therefore represents a feedback in which directional propagation and local velocity act together. The question is to determine how long this feedback remains controlled and when it forces finite-time blow-up.

Let \(T(\eta)\) be the maximal existence time of the classical solution of \eqref{eq:main}; this quantity is well defined by the uniqueness statement of \cref{lem:uniqueness} below. We prove that, under the one-sided condition
\begin{equation}
\psi-\phi'\geq0
\quad\text{on }\R,
\label{eq:S0-positive-intro}
\end{equation}
together with \((\phi,\psi)\not\equiv(0,0)\), the lifespan has the exact order
\[
T(\eta)\asymp \eta^{-(m+n-1)}.
\]
More precisely, there exist constants \(c>0\) and \(C>0\), independent of \(\eta\), such that
\begin{equation}
c\eta^{-(m+n-1)}
\leq
T(\eta)
\leq
C\eta^{-(m+n-1)}
\qquad\text{for every }\eta>0.
\label{eq:sharp-lifespan-intro}
\end{equation}
No smallness of \(\eta\) is required, because the nonlinearity is positively homogeneous of degree \(m+n\); the regime \(\eta\to0^+\) is of course the one of interest.

The exponent \(m+n-1\) is dictated by the total degree of the nonlinear source. If the first derivatives of the solution have size \(\eta\), the Duhamel contribution to those derivatives has order
\[
T\eta^{m+n}.
\]
Comparing this term with the initial size \(\eta\) gives
\[
T\eta^{m+n}\sim \eta,
\]
and hence
\[
T\sim \eta^{-(m+n-1)}.
\]
The lower bound in \eqref{eq:sharp-lifespan-intro} turns this scaling argument into a rigorous estimate by means of a characteristic iteration. The upper bound is obtained by following a suitable characteristic curve and reducing the growth to a scalar superlinear inequality.

The characteristic mechanism is exposed by setting
\[
P=v_t+v_x,
\qquad
Q=v_t-v_x.
\]
In these variables, the source becomes
\[
|P|^m\left|\frac{P+Q}{2}\right|^n.
\]
The assumption \eqref{eq:S0-positive-intro} is exactly the nonnegativity of the initial value of \(Q\), and this sign is propagated by the characteristic equations, because the source is itself nonnegative. Along a left-going characteristic issued from a point where \(P\) is initially positive, the propagated sign of \(Q\) converts the exact equation for \(P\) into a superlinear differential inequality of order \(m+n\), which gives the upper estimate with the same power of \(\eta\) as the lower one.

A point that deserves emphasis is that no separate hypothesis is needed to guarantee that such a point exists. \Cref{lem:activation} shows that, for compactly supported data, the sign condition \eqref{eq:S0-positive-intro} alone already forces \(\psi+\phi'\) to be positive somewhere, unless the data vanish identically. The activation of the growing characteristic component is therefore automatic, and the theorem below rests on a single inequality on the data.

The formulation also explains why blow-up is not automatic for the equation itself: a compatible cancellation of the selected characteristic component produces a global free wave. This regime is stated separately below, and \cref{cor:disjoint} shows that it is disjoint from the regime of \eqref{eq:S0-positive-intro} except for trivial data.

The paper is organized as follows. In \cref{sec:char-formulation}, we rewrite \eqref{eq:main} in characteristic variables and record the homogeneity of the source. In \cref{sec:local}, we prove uniqueness and the lower lifespan bound through local existence estimates with explicit dependence on \(\eta\). In \cref{sec:global-regime}, we identify the cancellation regime leading to global free waves. In \cref{sec:blow-up}, we prove the automatic activation lemma and the upper bound. Finally, \cref{sec:sharp} combines the two estimates, states the lifespan result, and discusses the hypotheses.

\medskip
\noindent\textbf{Notation.} Throughout, \(\|\cdot\|_\infty\) denotes the supremum norm over the set indicated by the context, \(C_0^k(\R)\) the space of compactly supported functions of class \(C^k\), and \(C\), \(c\), \(\kappa_0\), \(\kappa_1\) positive constants depending only on \(m\), \(n\) and on the fixed profiles \(\phi,\psi\), never on \(\eta\).

\section{Characteristic formulation}\label{sec:char-formulation}

We pass to variables adapted to the two characteristic directions of the wave operator. Put
\begin{equation}
P=v_t+v_x,
\qquad
Q=v_t-v_x.
\label{eq:P-Q-def}
\end{equation}
Then
\begin{equation}
v_t=\frac{P+Q}{2},
\qquad
v_x=\frac{P-Q}{2}.
\label{eq:vt-vx-PQ}
\end{equation}
These variables satisfy
\begin{equation}
(\partial_t-\partial_x)P
=
v_{tt}-v_{xx},
\label{eq:P-transport-left}
\end{equation}
and
\begin{equation}
(\partial_t+\partial_x)Q
=
v_{tt}-v_{xx}.
\label{eq:Q-transport-right}
\end{equation}
Therefore \eqref{eq:main} is equivalent to
\begin{equation}
\begin{cases}
(\partial_t-\partial_x)P
=
|P|^m\left|\dfrac{P+Q}{2}\right|^n,
\\[6pt]
(\partial_t+\partial_x)Q
=
|P|^m\left|\dfrac{P+Q}{2}\right|^n.
\end{cases}
\label{eq:PQ-system}
\end{equation}
The system separates the two transport directions. It also makes clear that the source is built from the directional component \(P\) and the velocity \((P+Q)/2\), and that it is nonnegative.

The initial values of these components are
\begin{equation}
P(x,0)=\eta P_0(x),
\qquad
Q(x,0)=\eta Q_0(x),
\label{eq:PQ-data}
\end{equation}
where
\begin{equation}
P_0(x)=\psi(x)+\phi'(x),
\qquad
Q_0(x)=\psi(x)-\phi'(x).
\label{eq:P0-Q0-def}
\end{equation}

For brevity, set
\begin{equation}
\mathcal{F}(P,Q)
=
|P|^m\left|\frac{P+Q}{2}\right|^n.
\label{eq:F-def}
\end{equation}
Since \(m>1\) and \(n>1\), the maps \(s\mapsto|s|^{m}\) and \(s\mapsto|s|^{n}\) are of class \(C^1\), so \(\mathcal{F}\) is \(C^1\) on \(\R^2\). Moreover \(\mathcal{F}\) is positively homogeneous of degree \(m+n\), so that its partial derivatives are positively homogeneous of degree \(m+n-1\). Setting
\begin{equation}
\kappa_0=\max_{|P|+|Q|=1}\mathcal{F}(P,Q),
\qquad
\kappa_1=\max_{|P|+|Q|=1}\bigl(|\mathcal{F}_P(P,Q)|+|\mathcal{F}_Q(P,Q)|\bigr),
\label{eq:kappa-def}
\end{equation}
we obtain the global bounds
\begin{equation}
0\leq\mathcal{F}(P,Q)
\leq
\kappa_0(|P|+|Q|)^{m+n},
\label{eq:F-size}
\end{equation}
and
\begin{equation}
|\mathcal{F}_P(P,Q)|+|\mathcal{F}_Q(P,Q)|
\leq
\kappa_1(|P|+|Q|)^{m+n-1},
\label{eq:F-deriv-size}
\end{equation}
valid on all of \(\R^2\) and not merely on bounded sets. We shall also use the local Hölder continuity of the gradient of \(\mathcal{F}\): setting
\begin{equation}
\theta=\min\{m-1,\,n-1,\,1\}\in(0,1],
\label{eq:theta-def}
\end{equation}
for every \(K>0\) there is \(\kappa_2=\kappa_2(K,m,n)\) with
\begin{equation}
\bigl|\nabla\mathcal{F}(z_1)-\nabla\mathcal{F}(z_2)\bigr|
\leq
\kappa_2|z_1-z_2|^{\theta},
\qquad
z_1,z_2\in\R^2,\ |z_1|,|z_2|\leq K.
\label{eq:F-holder}
\end{equation}
Indeed \(\nabla\mathcal{F}\) is built from products of bounded functions and of the maps \(s\mapsto|s|^{m-1}\operatorname{sgn}(s)\) and \(s\mapsto|s|^{n-1}\operatorname{sgn}(s)\), which are \(\theta\)-Hölder on bounded sets.

Integrating the two transport equations along their respective characteristic curves yields
\begin{equation}
P(x,t)
=
\eta P_0(x+t)
+
\int_0^t
\mathcal{F}(P,Q)(x+t-s,s)\,ds,
\label{eq:P-integral}
\end{equation}
and
\begin{equation}
Q(x,t)
=
\eta Q_0(x-t)
+
\int_0^t
\mathcal{F}(P,Q)(x-t+s,s)\,ds.
\label{eq:Q-integral}
\end{equation}
These formulas are the starting point for the lower-bound argument, because they keep the dependence on \(\eta\) explicit.

Conversely, assume that \((P,Q)\in C^1(\R\times[0,T])^2\) satisfies \eqref{eq:P-integral}--\eqref{eq:Q-integral}. Define
\begin{equation}
v(x,t)
=
\eta\phi(x)
+
\frac12\int_0^t (P+Q)(x,\tau)\,d\tau.
\label{eq:v-reconstruction}
\end{equation}
Then \(v_t=(P+Q)/2\). The remaining identity
\begin{equation}
v_x=\frac{P-Q}{2}
\label{eq:vx-compatibility}
\end{equation}
follows from the transport system and the compatibility at \(t=0\),
\begin{equation}
v_x(x,0)=\eta\phi'(x)
=
\frac{\eta P_0(x)-\eta Q_0(x)}{2}.
\label{eq:initial-vx-compatibility}
\end{equation}
Indeed, the difference \(w=v_x-(P-Q)/2\) satisfies \(w_t=0\) by \eqref{eq:PQ-system} and \(w(\cdot,0)=0\). Thus the function reconstructed by \eqref{eq:v-reconstruction} is a classical solution of \eqref{eq:main}.

\begin{remark}\label{rem:scaling}
The homogeneity of \(\mathcal{F}\) corresponds to an exact scaling invariance of \eqref{eq:main}: if \(v\) solves the equation, then so does
\[
v_\lambda(x,t)=\lambda^{-\frac{m+n-2}{m+n-1}}v(\lambda x,\lambda t),
\qquad\lambda>0.
\]
This invariance dilates the profile together with the amplitude, so it does not by itself determine \(T(\eta)\) for a fixed pair \((\phi,\psi)\). It does explain, however, why the estimates below carry no smallness restriction on \(\eta\): every bound produced by \eqref{eq:F-size}--\eqref{eq:F-deriv-size} is homogeneous in the amplitude.
\end{remark}

\section{Uniqueness, local existence and lower lifespan bound}\label{sec:local}

We now prove the lower estimate
\[
T(\eta)\geq c\eta^{-(m+n-1)}.
\]
The argument uses the characteristic integral equations and keeps track of the amplitude at each step. The nonlinear term stays controlled on time intervals of the expected length \(\eta^{-(m+n-1)}\). We begin with uniqueness, which is what makes the maximal existence time \(T(\eta)\) well defined.

Choose \(\rho>0\) such that
\[
\supp \phi\cup\supp \psi\subset [-\rho,\rho].
\]
Then
\[
\supp P_0\cup\supp Q_0\subset [-\rho,\rho].
\]

\begin{lemma}[Uniqueness]\label{lem:uniqueness}
Let \(T>0\). Two classical solutions of \eqref{eq:main} on \(\R\times[0,T]\) with the same initial data coincide. Consequently the maximal existence time \(T(\eta)\) is well defined, and the solution is defined on \(\R\times[0,T(\eta))\).
\end{lemma}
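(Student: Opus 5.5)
The plan is to pass to the characteristic variables and run a Gronwall argument on the difference of the two solutions, localized by finite propagation speed. Let \(v^{(1)},v^{(2)}\) be two classical solutions on \(\R\times[0,T]\) with the same data, and set \(P^{(i)}=v^{(i)}_t+v^{(i)}_x\) and \(Q^{(i)}=v^{(i)}_t-v^{(i)}_x\). For a classical (\(C^2\)) solution these are \(C^1\) functions satisfying the system \eqref{eq:PQ-system}. Integrating along characteristics, both pairs satisfy the integral equations \eqref{eq:P-integral}--\eqref{eq:Q-integral} with the same initial values \(\eta P_0,\eta Q_0\).

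Since the solutions need not be bounded on all of \(\R\times[0,T]\) a priori, I will work on backward triangles. Fix \((x_0,t_0)\) with \(t_0\le T\) and put
\[
\Delta=\{(x,t):0\le t\le t_0,\ |x-x_0|\le t_0-t\}.
\]
This set is compact, so \(P^{(i)},Q^{(i)}\) are bounded there, say \(|P^{(i)}|+|Q^{(i)}|\le K\). Both characteristic segments through a point of \(\Delta\), back to \(t=0\), stay inside \(\Delta\). By \eqref{eq:F-deriv-size} and the mean value theorem, \(\mathcal{F}\) is Lipschitz on the ball of radius \(K\) with constant \(L=\kappa_1 K^{m+n-1}\); only the \(C^1\) regularity of \(\mathcal F\) is needed here, not \eqref{eq:F-holder}.

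Now set
\[
D(t)=\sup\{|P^{(1)}-P^{(2)}|+|Q^{(1)}-Q^{(2)}| : (x,t)\in\Delta\}
\]
for \(0\le t\le t_0\). Subtract the integral equations at a point \((x,t)\in\Delta\); the data terms cancel, and the integrands are evaluated at points of \(\Delta\) at height \(s\). This gives
\[
D(t)\le 2L\int_0^t D(s)\,ds .
\]
The function \(D\) is bounded, and it is measurable (indeed upper semicontinuous) by the continuity of the \(P^{(i)},Q^{(i)}\) on the compact set. Gronwall's inequality therefore gives \(D\equiv0\). Hence \(P^{(1)}=P^{(2)}\) and \(Q^{(1)}=Q^{(2)}\) on \(\Delta\), and since \((x_0,t_0)\) was arbitrary, on the whole strip.

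Consequently \(v^{(1)}_t=v^{(2)}_t\) everywhere. Integrating in time from the common datum \(\eta\phi\) gives \(v^{(1)}=v^{(2)}\).

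For the second assertion, uniqueness shows that classical solutions on different intervals \([0,T_1]\) and \([0,T_2]\) agree on the common interval. I therefore define \(T(\eta)\) as the supremum of the times \(T\) for which a classical solution exists on \(\R\times[0,T]\). Patching the solutions together yields a well-defined solution on \(\R\times[0,T(\eta))\).

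The only delicate point is the lack of global bounds on the solutions, which the restriction to the compact triangle \(\Delta\) resolves; a minor technicality is the measurability of \(D\) in the Gronwall step.
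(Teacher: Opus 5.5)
Your proof is correct and rests on the same ingredients as the paper's: the integral equations \eqref{eq:P-integral}--\eqref{eq:Q-integral}, the Lipschitz constant \(\kappa_1K^{m+n-1}\) coming from \eqref{eq:F-deriv-size}, and Gronwall's inequality. You differ from the paper in how the a priori bound \(K\) is obtained.

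The paper first asserts finite propagation, i.e.\ that \(P^i,Q^i\) vanish outside the cone \(\{|x|\le t+\rho\}\). This gives a bound on the whole strip, and the paper then applies Gronwall to the global quantity \(N(t)=\sup_{x\in\R}(|P^1-P^2|+|Q^1-Q^2|)(x,t)\). That support claim is only sketched there, via a ``Gronwall argument on backward domains of dependence''. This is exactly your triangle argument, applied to \((P^i,Q^i)\) and the zero pair on triangles whose base lies outside \([-\rho,\rho]\).

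You apply the local argument directly to the difference of the two solutions. As a result you skip the support step entirely, never use the compact support of the data, and leave nothing sketched. Your measurability concern is also harmless: \(D\) is in fact continuous, since the slices of \(\Delta\) vary continuously with \(t\).

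What the paper's route gives in exchange is the cone-support property itself. That is the structure built into the spaces \(Y_T\) and \(X_T\) used in the existence proof.
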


\begin{proof}
Let \(v^1,v^2\) be two such solutions and let \((P^i,Q^i)\) be the associated characteristic variables, which satisfy \eqref{eq:P-integral}--\eqref{eq:Q-integral} with the same data. By finite propagation, both pairs are supported in \(\{|x|\leq t+\rho\}\); this is contained in the integral formulas themselves, because outside this cone the data terms in \eqref{eq:P-integral}--\eqref{eq:Q-integral} vanish and both families of backward characteristics remain outside the cone, so a Gronwall argument on backward domains of dependence forces \(P^i\) and \(Q^i\) to vanish there. Hence
\[
K=\max_{i=1,2}\ \sup_{\R\times[0,T]}\bigl(|P^i|+|Q^i|\bigr)<\infty .
\]
Set
\[
N(t)=\sup_{x\in\R}\bigl(|P^1-P^2|+|Q^1-Q^2|\bigr)(x,t),
\]
a continuous function with \(N(0)=0\). Since \(\mathcal{F}\) is \(C^1\), it is Lipschitz on the ball of radius \(K\), with constant \(L=\kappa_1K^{m+n-1}\) by \eqref{eq:F-deriv-size}. Subtracting the integral identities \eqref{eq:P-integral} for \(P^1\) and \(P^2\), and likewise for \(Q\), and taking suprema gives
\[
N(t)\leq 2L\int_0^tN(s)\,ds,
\qquad 0\leq t\leq T .
\]
Gronwall's inequality yields \(N\equiv0\), hence \(P^1=P^2\) and \(Q^1=Q^2\), and then \(v^1=v^2\) by \eqref{eq:v-reconstruction}. Two solutions therefore agree on the intersection of their intervals of existence, so the union of all classical solutions is again one, and it is maximal.
\end{proof}

For \(T>0\), let \(Y_T\) denote the set of pairs \((P,Q)\) of continuous functions on \(\R\times[0,T]\) with
\[
\supp(P,Q)\subset\{(x,t): |x|\leq t+\rho\},
\]
equipped with the norm
\begin{equation}
\|(P,Q)\|_{Y_T}
=
\|P\|_\infty+\|Q\|_\infty ,
\label{eq:Y-norm}
\end{equation}
and let \(X_T\subset Y_T\) be the subset of pairs of class \(C^1\), with
\begin{equation}
\|(P,Q)\|_{X_T}
=
\|P\|_\infty+\|Q\|_\infty
+
\|P_x\|_\infty+\|Q_x\|_\infty.
\label{eq:X-norm}
\end{equation}
We also introduce
\begin{equation}
D_0
=
\|P_0\|_\infty+\|Q_0\|_\infty
+\|P_0'\|_\infty+\|Q_0'\|_\infty .
\label{eq:D0-def}
\end{equation}
If \((\phi,\psi)\not\equiv(0,0)\), then \(D_0>0\): indeed, \(D_0=0\) would force \(P_0\equiv Q_0\equiv0\), hence \(\psi\equiv0\) and \(\phi'\equiv0\), and then \(\phi\equiv0\) because \(\phi\) has compact support. For the trivial datum \((\phi,\psi)\equiv(0,0)\), the unique classical solution of \eqref{eq:main} is \(v\equiv0\) by \cref{lem:uniqueness}, so that \(T(\eta)=+\infty\); we henceforth assume that the data are nontrivial.

\begin{proposition}[Lower lifespan bound]\label{prop:lower-lifespan}
Let \(m>1\), \(n>1\), \(\phi\in C_0^2(\R)\), and \(\psi\in C_0^1(\R)\), with \((\phi,\psi)\not\equiv(0,0)\). Set \(M=4D_0\) and
\begin{equation}
c=\min\left\{
\frac{3D_0}{2\kappa_0M^{m+n}},\
\frac{1}{4\kappa_1M^{m+n-1}}
\right\}>0 .
\label{eq:c-explicit}
\end{equation}
Then, for every \(\eta>0\), the Cauchy problem \eqref{eq:main} has a classical solution on every interval \([0,T]\) with
\begin{equation}
0<T\leq c\eta^{-(m+n-1)}.
\label{eq:lower-T-condition}
\end{equation}
In particular,
\begin{equation}
T(\eta)\geq c\eta^{-(m+n-1)}.
\label{eq:lower-bound}
\end{equation}
\end{proposition}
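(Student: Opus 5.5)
Fix \(\eta>0\) and \(T\) satisfying \eqref{eq:lower-T-condition}. We will construct \((P,Q)\in X_T\) solving the integral system \eqref{eq:P-integral}--\eqref{eq:Q-integral} by Picard iteration, and then use the reconstruction \eqref{eq:v-reconstruction} to obtain the classical solution. Define the map \(\Phi(P,Q)=(\tilde P,\tilde Q)\) by the right-hand sides of \eqref{eq:P-integral}--\eqref{eq:Q-integral}. Set \(P^{(0)}=\eta P_0(x+t)\), \(Q^{(0)}=\eta Q_0(x-t)\) and \((P^{(k+1)},Q^{(k+1)})=\Phi(P^{(k)},Q^{(k)})\). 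The support condition \(\{|x|\le t+\rho\}\) is preserved by \(\Phi\). Indeed, if \(|x|>t+\rho\), then the backward characteristic points \((x+t-s,s)\) and \((x-t+s,s)\) all satisfy \(|y|>s+\rho\), so the integrand vanishes there and the data terms vanish too.

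\emph{Step 1 (uniform sup bound).} We show inductively that \(\|(P^{(k)},Q^{(k)})\|_{Y_T}\le M\eta/2\), say, or at least \(\le M\eta\). Using \eqref{eq:F-size},
\[
\|\tilde P\|_\infty+\|\tilde Q\|_\infty\le \eta(\|P_0\|_\infty+\|Q_0\|_\infty)+2T\kappa_0 (M\eta)^{m+n}.
\]
Since \(T\le c\eta^{-(m+n-1)}\) and \(c\le 3D_0/(2\kappa_0M^{m+n})\), the nonlinear term is at most \(3D_0\eta\). The total is therefore at most \(4D_0\eta=M\eta\). The choice of \(c\) is evidently tailored to exactly this computation.

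\emph{Step 2 (derivative bound).} Differentiating in \(x\), with \(C^1\) regularity inherited from \(P_0,Q_0\in C^1\) and \(\mathcal F\in C^1\), gives
\[
\tilde P_x=\eta P_0'(x+t)+\int_0^t(\mathcal F_PP_x+\mathcal F_QQ_x)\,ds,
\]
and similarly for \(\tilde Q_x\). By \eqref{eq:F-deriv-size} and \(T\le c\eta^{-(m+n-1)}\) with \(c\le 1/(4\kappa_1M^{m+n-1})\), we get
\[
\|\tilde P_x\|+\|\tilde Q_x\|\le \eta D_0+2T\kappa_1(M\eta)^{m+n-1}\bigl(\|P_x\|+\|Q_x\|\bigr)\le \eta D_0+\tfrac12\bigl(\|P_x\|+\|Q_x\|\bigr).
\]
Hence the bound \(\|P_x\|+\|Q_x\|\le 2D_0\eta\le M\eta\) propagates. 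The full \(X_T\) norm may exceed \(M\eta\) by a constant factor. To avoid this, we will keep the two bounds separately: the sup bound by \(M\eta\) and the derivative bound by \(2D_0\eta\). Only the sup bound enters the constants through the Lipschitz estimate.

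\emph{Step 3 (contraction and \(C^1\) limit).} The same Lipschitz estimate \(L=\kappa_1(M\eta)^{m+n-1}\) gives \(\|\Phi(z_1)-\Phi(z_2)\|_{Y_T}\le 2TL\|z_1-z_2\|_{Y_T}\le\frac12\|z_1-z_2\|_{Y_T}\). So the iterates converge uniformly to a continuous fixed point. The main obstacle is upgrading this to \(C^1\), since Step 2 only gives uniform bounds on \(x\)-derivatives, not convergence. I plan to show that the derivative iterates are Cauchy. The difference of their integrands contains the terms \((\nabla\mathcal F(z^{(k)})-\nabla\mathcal F(z^{(k-1)}))\cdot z_x^{(k)}\). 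These are bounded by \(\kappa_2\|z^{(k)}-z^{(k-1)}\|^\theta\cdot 2D_0\eta\) through the Hölder bound \eqref{eq:F-holder}, and this is summable because \(\|z^{(k)}-z^{(k-1)}\|\) decays geometrically. The remaining part is again contracted by the factor \(\frac12\). A Cauchy argument then yields uniform convergence of \(P^{(k)}_x,Q^{(k)}_x\), so the limit is \(C^1\) in \(x\). Time derivatives are then recovered from the transport form \eqref{eq:PQ-system}, since \(P_t=P_x+\mathcal F\) along characteristics, and this gives \((P,Q)\in C^1\).

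\emph{Step 4 (conclusion).} Finally, \eqref{eq:v-reconstruction} and the compatibility argument of \cref{sec:char-formulation} give a classical solution on \([0,T]\). This \(T\) is arbitrary subject to \eqref{eq:lower-T-condition}, and \cref{lem:uniqueness} lets us glue the solutions. Therefore \(T(\eta)\ge c\eta^{-(m+n-1)}\).
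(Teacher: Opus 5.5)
Your proposal is correct and follows the paper's proof essentially step by step: Picard iteration on the characteristic integral system in the ball of radius \(M\eta\) (the first constraint on \(c\) gives invariance, the second gives the contraction factor \(\tfrac12\) and the derivative bound \(2D_0\eta\)), then the Hölder bound \eqref{eq:F-holder} to make the \(x\)-derivative iterates Cauchy, followed by the reconstruction of \(v\) and uniqueness. The only slip is the tentative bound \(M\eta/2\) in Step 1, which your estimate does not give; the bound \(M\eta\) that you actually derive is the one the argument needs.
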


\begin{proof}
We construct a solution of \eqref{eq:P-integral}--\eqref{eq:Q-integral} by a Picard scheme, indexed by \(k\in\mathbb{N}\) so as to avoid any confusion with the exponent \(n\). For \((P,Q)\in Y_T\), define \(\Psi(P,Q)=(\Psi_1(P,Q),\Psi_2(P,Q))\) through
\begin{equation}
\Psi_1(P,Q)(x,t)
=
\eta P_0(x+t)
+
\int_0^t
\mathcal{F}(P,Q)(x+t-s,s)\,ds,
\label{eq:Psi1}
\end{equation}
and
\begin{equation}
\Psi_2(P,Q)(x,t)
=
\eta Q_0(x-t)
+
\int_0^t
\mathcal{F}(P,Q)(x-t+s,s)\,ds.
\label{eq:Psi2}
\end{equation}

\emph{Step 1: the support condition is preserved.}
The term \(\eta P_0(x+t)\) vanishes unless \(|x+t|\leq\rho\), hence unless \(|x|\leq t+\rho\). The integrand in \eqref{eq:Psi1} vanishes unless \(|x+t-s|\leq s+\rho\) for some \(s\in[0,t]\), which forces \(-t-\rho\leq x\leq 2s-t+\rho\leq t+\rho\). The same computation applies to \eqref{eq:Psi2}. Hence \(\Psi\) maps \(Y_T\) into \(Y_T\), and \(X_T\) into \(X_T\).

\emph{Step 2: invariance of a ball.}
Consider
\begin{equation}
B_T
=
\{(P,Q)\in Y_T:\ \|(P,Q)\|_{Y_T}\leq M\eta\},
\label{eq:ball}
\end{equation}
a closed subset of the complete metric space \(Y_T\). If \((P,Q)\in B_T\), then \eqref{eq:F-size} gives
\begin{equation}
\|\mathcal{F}(P,Q)\|_\infty
\leq
\kappa_0(M\eta)^{m+n}.
\label{eq:F-bound-ball}
\end{equation}
Consequently,
\begin{equation}
\|\Psi(P,Q)\|_{Y_T}
\leq
\eta(\|P_0\|_\infty+\|Q_0\|_\infty)
+
2T\kappa_0(M\eta)^{m+n}
\leq
D_0\eta+2T\kappa_0(M\eta)^{m+n}.
\label{eq:Psi-C0-bound}
\end{equation}
If \(T\leq c\eta^{-(m+n-1)}\) with \(c\) as in \eqref{eq:c-explicit}, then \(2T\kappa_0(M\eta)^{m+n}\leq 3D_0\eta\), whence
\[
\|\Psi(P,Q)\|_{Y_T}\leq 4D_0\eta=M\eta,
\]
so that \(\Psi(B_T)\subset B_T\). Note that no smallness of \(\eta\) has been used: both sides of the constraint are homogeneous in \(\eta\).

\emph{Step 3: contraction.}
If \((P,Q),(\widetilde P,\widetilde Q)\in B_T\), the segment joining the two lies in the ball of radius \(M\eta\), so the mean value theorem together with \eqref{eq:F-deriv-size} gives
\begin{equation}
\|\mathcal{F}(P,Q)-\mathcal{F}(\widetilde P,\widetilde Q)\|_\infty
\leq
\kappa_1(M\eta)^{m+n-1}
\bigl\|(P-\widetilde P,\,Q-\widetilde Q)\bigr\|_{Y_T}.
\label{eq:F-Lip}
\end{equation}
Hence
\begin{equation}
\|\Psi(P,Q)-\Psi(\widetilde P,\widetilde Q)\|_{Y_T}
\leq
2T\kappa_1(M\eta)^{m+n-1}
\bigl\|(P-\widetilde P,\,Q-\widetilde Q)\bigr\|_{Y_T}
\leq
\frac12\bigl\|(P-\widetilde P,\,Q-\widetilde Q)\bigr\|_{Y_T},
\label{eq:Psi-contraction}
\end{equation}
by the second constraint in \eqref{eq:c-explicit}. Starting from
\[
(P^{(0)},Q^{(0)})(x,t)
=
\bigl(\eta P_0(x+t),\,\eta Q_0(x-t)\bigr)\in B_T\cap X_T,
\]
and setting \((P^{(k+1)},Q^{(k+1)})=\Psi(P^{(k)},Q^{(k)})\), the Banach fixed point theorem gives a unique \((P,Q)\in B_T\) with \(\Psi(P,Q)=(P,Q)\), and
\begin{equation}
\bigl\|(P^{(k+1)}-P^{(k)},\,Q^{(k+1)}-Q^{(k)})\bigr\|_{Y_T}
\leq
2^{-k}E_0,
\qquad
E_0=\|(P^{(1)}-P^{(0)},\,Q^{(1)}-Q^{(0)})\|_{Y_T}.
\label{eq:geometric}
\end{equation}
Passing to the limit in \eqref{eq:Psi1}--\eqref{eq:Psi2}, which is legitimate by uniform convergence and continuity of \(\mathcal{F}\), shows that \((P,Q)\) satisfies \eqref{eq:P-integral}--\eqref{eq:Q-integral}.

\emph{Step 4: regularity in \(x\).}
Differentiating \eqref{eq:Psi1}--\eqref{eq:Psi2} with respect to \(x\) shows that \(\Lambda^{(k)}=\partial_xP^{(k)}\) and \(\Omega^{(k)}=\partial_xQ^{(k)}\) satisfy
\begin{align}
\Lambda^{(k+1)}(x,t)
&=
\eta P_0'(x+t)
+
\int_0^t
\bigl[
\mathcal{F}_P^{(k)}\Lambda^{(k)}+\mathcal{F}_Q^{(k)}\Omega^{(k)}
\bigr](x+t-s,s)\,ds,
\label{eq:Lambda-iter}
\\
\Omega^{(k+1)}(x,t)
&=
\eta Q_0'(x-t)
+
\int_0^t
\bigl[
\mathcal{F}_P^{(k)}\Lambda^{(k)}+\mathcal{F}_Q^{(k)}\Omega^{(k)}
\bigr](x-t+s,s)\,ds,
\label{eq:Omega-iter}
\end{align}
where \(\mathcal{F}_P^{(k)}=\mathcal{F}_P(P^{(k)},Q^{(k)})\) and similarly for \(\mathcal{F}_Q^{(k)}\). By \eqref{eq:F-deriv-size} and the second constraint in \eqref{eq:c-explicit},
\[
2T\bigl(\|\mathcal{F}_P^{(k)}\|_\infty+\|\mathcal{F}_Q^{(k)}\|_\infty\bigr)
\leq
2T\kappa_1(M\eta)^{m+n-1}\leq\frac12 ,
\]
so an induction gives \(\|\Lambda^{(k)}\|_\infty+\|\Omega^{(k)}\|_\infty\leq2\eta D_0\leq M\eta\) for every \(k\). Set
\[
e_k=\bigl\|\Lambda^{(k+1)}-\Lambda^{(k)}\bigr\|_\infty+\bigl\|\Omega^{(k+1)}-\Omega^{(k)}\bigr\|_\infty .
\]
Subtracting \eqref{eq:Lambda-iter} at two consecutive indices, and likewise \eqref{eq:Omega-iter}, and using \eqref{eq:F-holder} with \(K=M\eta\) together with \eqref{eq:geometric}, we obtain
\[
e_k\leq\frac12e_{k-1}+2T\kappa_2\bigl(2^{-(k-1)}E_0\bigr)^{\theta}M\eta
=\frac12e_{k-1}+A\,2^{-k\theta},
\]
with \(A=A(T,\eta,E_0,M,\kappa_2,\theta)\) independent of \(k\). Since \(\mu:=\max\{1/2,2^{-\theta}\}<1\), an elementary induction gives \(e_k\leq(e_0+2A)\,(k+1)\mu^{k}\), which is summable. Hence \((\Lambda^{(k)},\Omega^{(k)})\) converges uniformly to a continuous pair \((\Lambda,\Omega)\). As \((P^{(k)},Q^{(k)})\to(P,Q)\) uniformly and the derivatives converge uniformly, we conclude that \(P,Q\) are of class \(C^1\) in \(x\) with \(P_x=\Lambda\) and \(Q_x=\Omega\). The integral identities \eqref{eq:P-integral}--\eqref{eq:Q-integral} then give differentiability in \(t\) as well, and
\begin{equation}
(\partial_t-\partial_x)P
=
\mathcal{F}(P,Q),
\qquad
(\partial_t+\partial_x)Q
=
\mathcal{F}(P,Q)
\label{eq:transport-limit}
\end{equation}
in the classical sense, so that \((P,Q)\in X_T\).

\emph{Step 5: conclusion.}
Set
\[
v(x,t)
=
\eta\phi(x)
+
\frac12\int_0^t (P+Q)(x,\tau)\,d\tau .
\]
By the reconstruction argument of \cref{sec:char-formulation}, \(v\) is a classical solution of \eqref{eq:main} on \([0,T]\). Since the argument applies to every \(T\) satisfying \eqref{eq:lower-T-condition}, and since solutions are unique by \cref{lem:uniqueness}, we obtain \eqref{eq:lower-bound}.
\end{proof}

\begin{remark}\label{rem:lower}
The estimate shows that the characteristic integral formulation remains controlled precisely on the scale \(T\eta^{m+n-1}=O(1)\). No sign condition on the data has been used, and no smallness of \(\eta\); the constant \(c\) of \eqref{eq:c-explicit} is explicit in terms of \(m\), \(n\) and the quantity \(D_0\) of \eqref{eq:D0-def}. The upper bound below shows that this scale is optimal as soon as the growing characteristic component is present.
\end{remark}

\section{A global regime generated by characteristic cancellation}\label{sec:global-regime}

The equation also admits a cancellation regime. For a special compatibility condition on the data, the characteristic factor \(v_t+v_x\) vanishes identically and the nonlinear term disappears. Blow-up is therefore not forced by the power structure alone.

\begin{proposition}[A free-wave global regime]\label{prop:global-free}
Assume
\begin{equation}
\psi+\phi'\equiv0
\quad\text{on }\R.
\label{eq:P0-zero}
\end{equation}
Then, for every \(\eta>0\), the function
\begin{equation}
v(x,t)=\eta\phi(x-t)
\label{eq:free-wave}
\end{equation}
is the unique classical solution of \eqref{eq:main}, and it is global: \(T(\eta)=+\infty\).
\end{proposition}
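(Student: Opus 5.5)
The plan is a direct verification followed by an appeal to \cref{lem:uniqueness}. First I would check that \(v(x,t)=\eta\phi(x-t)\) is a classical solution on \(\R\times[0,\infty)\). Since \(\phi\in C_0^2(\R)\), \(v\) is of class \(C^2\), and it solves the free wave equation, because \(v_{tt}=\eta\phi''(x-t)=v_{xx}\). Its initial data are \(v(x,0)=\eta\phi(x)\) and \(v_t(x,0)=-\eta\phi'(x)=\eta\psi(x)\), the last equality by \eqref{eq:P0-zero}. For the nonlinear term, \(v_t+v_x=-\eta\phi'(x-t)+\eta\phi'(x-t)=0\) identically, so \(|v_t+v_x|^m|v_t|^n\equiv0\), where \(m>1\) guarantees \(0^m=0\). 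Hence \(v\) solves \eqref{eq:main} on \(\R\times[0,T]\) for every \(T>0\).

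Next, for uniqueness, I would fix any \(T>0\) and let \(\tilde v\) be any classical solution on \(\R\times[0,T]\) with the same data. By \cref{lem:uniqueness}, \(\tilde v=v\) there. Since \(v\) exists on \([0,T]\) for every \(T\), the maximal existence time is \(T(\eta)=+\infty\).

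As a consistency check in the characteristic variables, the data give \(P_0\equiv0\) and \(Q_0=2\psi=-2\phi'\). The pair \(P\equiv0\), \(Q=\eta Q_0(x-t)\) satisfies \eqref{eq:P-integral}--\eqref{eq:Q-integral}, because \(\mathcal F(0,Q)=0\). This agrees with \(v_t-v_x=-2\eta\phi'(x-t)\).

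No step is a real obstacle. The only subtle point is that the uniqueness lemma applies to arbitrary classical solutions, not only to compactly supported ones. The lemma as stated already covers this, because it derives finite propagation from the integral formulas.
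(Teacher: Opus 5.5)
Your proposal is correct and follows the paper's proof essentially verbatim: you verify that \(\eta\phi(x-t)\) is a \(C^2\) global solution with \(v_t+v_x\equiv0\) and \(v_{tt}-v_{xx}=0\), check the data using \(\psi=-\phi'\), and invoke \cref{lem:uniqueness} for uniqueness and \(T(\eta)=+\infty\). The characteristic-variable consistency check is a harmless extra that the paper does not include.
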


\begin{proof}
Take \(v(x,t)=\eta\phi(x-t)\), which is of class \(C^2\) since \(\phi\in C_0^2(\R)\). Then
\[
v_t(x,t)=-\eta\phi'(x-t),
\qquad
v_x(x,t)=\eta\phi'(x-t),
\]
so that \(v_t+v_x=0\) and therefore
\[
|v_t+v_x|^m|v_t|^n=0 .
\]
On the other hand,
\[
v_{tt}-v_{xx}
=
\eta\phi''(x-t)-\eta\phi''(x-t)
=
0,
\]
so the equation is satisfied for all \(t\geq0\). At \(t=0\) we have \(v(x,0)=\eta\phi(x)\) and, since \(\psi+\phi'\equiv0\),
\[
v_t(x,0)=-\eta\phi'(x)=\eta\psi(x).
\]
Thus \eqref{eq:free-wave} is a global classical solution, and it is the only one by \cref{lem:uniqueness}.
\end{proof}

\begin{remark}\label{rem:cancellation}
This proposition records the cancellation mechanism of the model. Under \eqref{eq:P0-zero} the free wave has \(P=v_t+v_x\equiv0\), so the nonlinear source is inactive for all time. The sign condition used in the next section is therefore not a formal device: it is precisely what excludes this cancellation, as \cref{cor:disjoint} makes quantitative. An analogous dichotomy is observed in \cite{HaruyamaSasakiTakamura2025} for the purely characteristic model \(|v_t\pm v_x|^{p-1}(v_t\pm v_x)\): when \(\psi\pm\phi'\equiv0\), the free wave \(\eta\phi(x\mp t)\) solves that equation globally.
\end{remark}

\section{Automatic activation and the upper lifespan bound}\label{sec:blow-up}

We next prove the upper bound. The argument follows the characteristic carrying the initially positive \(P\)-component. The propagated nonnegativity of \(Q\) then turns the source term into a scalar superlinear lower bound. Recall from \eqref{eq:P0-Q0-def} that
\[
P_0=\psi+\phi',
\qquad
Q_0=\psi-\phi' .
\]

The nonnegativity condition on \(Q_0\) is stable under the characteristic flow.

\begin{lemma}[Propagation of nonnegativity of \(Q\)]\label{lem:Q-positive}
Assume \(Q_0\geq0\) on \(\R\). Let \((P,Q)\) be a classical solution of \eqref{eq:PQ-system} on \([0,T)\) with data \eqref{eq:PQ-data}. Then \(Q(x,t)\geq0\) for every \((x,t)\in\R\times[0,T)\).
\end{lemma}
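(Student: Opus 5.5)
The proof is a direct integration of the second transport equation in \eqref{eq:PQ-system} along its characteristic, using that the source is nonnegative. Fix \((x,t)\in\R\times[0,T)\) and consider the right-going characteristic \(s\mapsto(x-t+s,s)\), \(s\in[0,t]\), through \((x,t)\). Since \((P,Q)\) is a classical solution, the function \(g(s)=Q(x-t+s,s)\) is \(C^1\) on \([0,t]\), and by the chain rule
\[
g'(s)=\bigl((\partial_t+\partial_x)Q\bigr)(x-t+s,s)=\mathcal{F}(P,Q)(x-t+s,s).
\]
Integrating from \(0\) to \(t\) recovers exactly the representation \eqref{eq:Q-integral}:
\[
Q(x,t)=\eta Q_0(x-t)+\int_0^t\mathcal{F}(P,Q)(x-t+s,s)\,ds .
\]

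By \eqref{eq:F-size}, \(\mathcal{F}\geq0\) everywhere on \(\R^2\), so the integral term is nonnegative whatever the values of \(P\) and \(Q\). The data term satisfies \(\eta Q_0(x-t)\geq0\), because \(\eta>0\) and \(Q_0\geq0\) by assumption. Hence \(Q(x,t)\geq0\). As \((x,t)\) was arbitrary, the lemma follows.

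No step is a genuine obstacle. The only point needing care is that a classical solution on \([0,T)\) really satisfies \eqref{eq:Q-integral} at every point. This holds because \(Q\) is \(C^1\) on each compact subset \(\R\times[0,t]\) with \(t<T\), and the characteristic segment lies in such a set. In particular, no a priori sign of \(P\) and no bound on the solution is required.
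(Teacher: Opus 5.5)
Your proposal is correct and is essentially the paper's proof: write \(Q\) via the integral representation \eqref{eq:Q-integral} along the right-going characteristic, then note that \(\eta Q_0(x-t)\geq0\) and \(\mathcal{F}\geq0\). The one thing you add is an explicit chain-rule derivation of \eqref{eq:Q-integral} from the classical transport equation, which the paper takes for granted.
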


\begin{proof}
From \eqref{eq:Q-integral},
\[
Q(x,t)
=
\eta Q_0(x-t)
+
\int_0^t
\mathcal{F}(P,Q)(x-t+s,s)\,ds .
\]
The first term is nonnegative by assumption, and so is the integrand, since \(\mathcal{F}\geq0\) by \eqref{eq:F-size}. Hence \(Q(x,t)\geq0\).
\end{proof}

The next lemma is elementary but it removes a hypothesis: for compactly supported data, the sign condition on \(Q_0\) already activates the component \(P\), unless the data vanish identically.

\begin{lemma}[Automatic activation]\label{lem:activation}
Let \(\phi\in C_0^2(\R)\) and \(\psi\in C_0^1(\R)\) satisfy \(Q_0=\psi-\phi'\geq0\) on \(\R\). If \((\phi,\psi)\not\equiv(0,0)\), then there exists \(x_0\in\R\) with
\[
P_0(x_0)=\psi(x_0)+\phi'(x_0)>0 .
\]
\end{lemma}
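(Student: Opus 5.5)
We argue by contradiction and assume that \(P_0=\psi+\phi'\leq0\) on all of \(\R\). The hypothesis gives \(Q_0=\psi-\phi'\geq0\) on \(\R\). Adding and subtracting these two inequalities yields
\[
\psi=\frac{P_0+Q_0}{2},\qquad \phi'=\frac{P_0-Q_0}{2}\leq0 \quad\text{on }\R .
\]
So \(\phi\) is nonincreasing on \(\R\). Since \(\phi\) has compact support, it vanishes for \(x\leq-\rho\) and for \(x\geq\rho\). A nonincreasing function that is zero at both ends of \([-\rho,\rho]\) must vanish on the whole interval. Hence \(\phi\equiv0\), and therefore \(\phi'\equiv0\).

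It then follows that \(\psi=P_0\leq0\) and \(\psi=Q_0\geq0\) on \(\R\), so \(\psi\equiv0\). This contradicts the assumption \((\phi,\psi)\not\equiv(0,0)\). Therefore there is a point \(x_0\) with \(P_0(x_0)>0\).

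An equivalent route avoids the contradiction by integrating. Since \(\phi\) has compact support,
\[
\int_\R P_0\,dx=\int_\R Q_0\,dx=\int_\R\psi\,dx .
\]
If \(P_0\leq0\) everywhere, this common integral is both \(\leq0\) and \(\geq0\). Continuity then forces \(P_0\equiv0\) and \(Q_0\equiv0\), which gives \(\psi\equiv0\) and \(\phi'\equiv0\), hence \(\phi\equiv0\).

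The only step that needs care is using compact support to pass from \(\phi'\equiv0\), or from \(\phi\) being monotone, to \(\phi\equiv0\). This is exactly the same observation made earlier in the paper to show \(D_0>0\).
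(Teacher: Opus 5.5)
Your first argument is correct and is the paper's proof: assuming \(P_0\leq0\), you combine it with \(Q_0\geq0\) to get \(\phi'\leq0\), use compact support to force \(\phi\equiv0\), and then squeeze \(\psi\) between \(0\) and \(0\). The integral variant is also valid, since \(\int_\R\phi'\,dx=0\) makes \(\int_\R P_0\,dx=\int_\R Q_0\,dx\), which forces both to vanish, but it is not needed.
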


\begin{proof}
Suppose, on the contrary, that \(P_0\leq0\) on \(\R\). Together with \(Q_0\geq0\) this gives
\[
\phi'\leq\psi\leq-\phi'
\quad\text{on }\R,
\]
hence \(2\phi'\leq0\), that is, \(\phi'\leq0\) everywhere. Thus \(\phi\) is nonincreasing on \(\R\). Since \(\phi\) has compact support, \(\phi\) vanishes both near \(-\infty\) and near \(+\infty\); a nonincreasing function with equal limits at \(\pm\infty\) is constant, so \(\phi\equiv0\) and therefore \(\phi'\equiv0\). The displayed inequalities then read \(0\leq\psi\leq0\), so \(\psi\equiv0\). This contradicts \((\phi,\psi)\not\equiv(0,0)\).
\end{proof}

\begin{corollary}[The two regimes are disjoint]\label{cor:disjoint}
If \(\psi-\phi'\geq0\) on \(\R\) and \(\psi+\phi'\equiv0\) on \(\R\), then \(\phi\equiv0\) and \(\psi\equiv0\). Equivalently, the cancellation regime of \cref{prop:global-free} meets the sign condition \(\psi\geq\phi'\) only at the trivial datum.
\end{corollary}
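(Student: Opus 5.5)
The corollary follows directly from \cref{lem:activation}, and I would argue by contradiction. Suppose \(\psi-\phi'\geq0\) and \(\psi+\phi'\equiv0\) on \(\R\), and that \((\phi,\psi)\not\equiv(0,0)\). The first condition is exactly \(Q_0\geq0\). Together with nontriviality, \cref{lem:activation} then yields a point \(x_0\) with \(P_0(x_0)=\psi(x_0)+\phi'(x_0)>0\). This contradicts the second condition, which says \(P_0\equiv0\). Hence the data must be trivial.

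As a sanity check, and to keep the corollary self-contained, I would also record the direct computation. From \(\psi=-\phi'\), the sign condition becomes \(-2\phi'\geq0\), so \(\phi\) is nonincreasing. A nonincreasing function with compact support vanishes near both \(-\infty\) and \(+\infty\), so it must be identically zero. Then \(\psi=-\phi'\equiv0\).

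For the "equivalently" clause, I would note that \(\psi\geq\phi'\) is a restatement of \(\psi-\phi'\geq0\). The cancellation regime of \cref{prop:global-free} is exactly \(\psi+\phi'\equiv0\), so the two formulations coincide.

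There is no real obstacle here. The only point needing care is the compact-support step that forces a monotone \(\phi\) to vanish, and that step is already contained in \cref{lem:activation}.
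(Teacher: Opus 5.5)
Your proposal is correct and follows essentially the same route as the paper, which also derives the corollary from \cref{lem:activation}: the paper observes that \(\psi+\phi'\equiv0\) is the special case \(P_0\equiv0\) of the contradiction hypothesis \(P_0\leq0\) in that lemma's proof. You instead invoke the lemma's conclusion directly, and your direct computation (\(-2\phi'\geq0\), so \(\phi\) is nonincreasing with compact support, hence \(\phi\equiv0\) and \(\psi\equiv0\)) is exactly that argument specialized to this case.
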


\begin{proof}
The hypothesis \(\psi+\phi'\equiv0\) is the case \(P_0\equiv0\) of the contradiction hypothesis in the proof of \cref{lem:activation}, which forces \((\phi,\psi)\equiv(0,0)\).
\end{proof}

We now prove the upper estimate.

\begin{proposition}[Upper lifespan bound]\label{prop:upper-lifespan}
Let \(m>1\), \(n>1\), \(\phi\in C_0^2(\R)\), \(\psi\in C_0^1(\R)\), and suppose that
\begin{equation}
Q_0=\psi-\phi'\geq0
\quad\text{on }\R,
\qquad
(\phi,\psi)\not\equiv(0,0).
\label{eq:blowup-hypothesis}
\end{equation}
Then \(P_0^{\max}:=\max_{\R}P_0>0\) and, for every \(\eta>0\),
\begin{equation}
T(\eta)
\leq
C\eta^{-(m+n-1)},
\qquad
C=\frac{2^{n}}{m+n-1}\bigl(P_0^{\max}\bigr)^{-(m+n-1)} .
\label{eq:upper-bound}
\end{equation}
\end{proposition}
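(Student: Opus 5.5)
First, \cref{lem:activation} gives a point \(x_0\) with \(P_0(x_0)>0\). Since \(P_0\) is continuous with compact support, its maximum \(P_0^{\max}\) is attained; choose \(x_0\) with \(P_0(x_0)=P_0^{\max}>0\). Suppose, for contradiction, that \(T(\eta)>C\eta^{-(m+n-1)}\). Then the classical solution exists on \([0,T_1]\) for some \(T_1>C\eta^{-(m+n-1)}\). By \cref{lem:Q-positive}, \(Q\geq0\) on \(\R\times[0,T_1]\).

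We follow the left-going characteristic through \(x_0\), namely \(x=x_0-t\). Set \(p(t)=P(x_0-t,t)\). The first equation of \eqref{eq:PQ-system} gives
\[
p'(t)=\mathcal{F}(P,Q)(x_0-t,t)\geq0,
\qquad p(0)=\eta P_0^{\max}>0 .
\]
Hence \(p\) is nondecreasing, so \(p(t)>0\) for all \(t\). Because \(Q\geq0\) and \(P>0\) along this curve, we have \(|P+Q|/2\geq P/2\). This yields the scalar inequality
\[
p'(t)\geq 2^{-n}p(t)^{m+n}.
\]
This is the closure step described in the introduction, and it is the only place where the sign of \(Q\) is used.

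Next we integrate the inequality. Since \(p>0\) and \(m+n-1>0\), we can differentiate \(p^{-(m+n-1)}\):
\[
\frac{d}{dt}\,p^{-(m+n-1)}=-(m+n-1)p^{-(m+n)}p'\leq-(m+n-1)2^{-n}.
\]
Integrating from \(0\) to \(t\) gives
\[
0<p(t)^{-(m+n-1)}\leq(\eta P_0^{\max})^{-(m+n-1)}-\frac{m+n-1}{2^n}\,t .
\]
The right-hand side becomes nonpositive at \(t=C\eta^{-(m+n-1)}\), with \(C\) exactly as in \eqref{eq:upper-bound}. This contradicts the boundedness of the classical solution \(P\) on the compact segment \(\{(x_0-t,t):0\le t\le C\eta^{-(m+n-1)}\}\subset\R\times[0,T_1]\). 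Therefore \(T(\eta)\leq C\eta^{-(m+n-1)}\).

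The only delicate point is justifying the chain rule along the characteristic. This requires \(P\in C^1\) up to the times considered, which holds for a classical solution since \(P=v_t+v_x\) with \(v\in C^2\). It also requires \(p\) to stay positive, so that \(p^{-(m+n-1)}\) is differentiable; this follows from monotonicity. Everything else is a direct computation, and no smallness of \(\eta\) enters.
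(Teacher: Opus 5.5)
Your proposal is correct and follows the paper's proof essentially line by line. You use \cref{lem:activation} to pick a maximizer \(x_0\) of \(P_0\), follow the left-going characteristic \(x=x_0-t\), and use the propagated sign of \(Q\) from \cref{lem:Q-positive} to close the equation into \(p'\geq 2^{-n}p^{m+n}\). Integrating \(p^{-(m+n-1)}\) then gives a contradiction if the solution exists up to \(t_*=C\eta^{-(m+n-1)}\).
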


\begin{proof}
By \cref{lem:activation} there is a point at which \(P_0>0\); since \(P_0\) is continuous with compact support, its maximum \(P_0^{\max}\) is attained, at some \(x_0\in\R\), and is positive. We work along the left-going characteristic
\[
x+t=x_0,
\]
which is the characteristic direction of the first equation in \eqref{eq:PQ-system}. Define
\begin{equation}
Y(t)=P(x_0-t,t),
\qquad 0\leq t<T(\eta).
\label{eq:Y-def}
\end{equation}
Then
\[
Y'(t)
=
(\partial_t-\partial_x)P(x_0-t,t),
\]
so that the first equation of \eqref{eq:PQ-system} gives
\begin{equation}
Y'(t)
=
|Y(t)|^m
\left|
\frac{Y(t)+Q(x_0-t,t)}{2}
\right|^n .
\label{eq:Y-exact}
\end{equation}

At \(t=0\),
\begin{equation}
Y(0)=P(x_0,0)=\eta P_0^{\max}>0 .
\label{eq:Y0}
\end{equation}
Since the right-hand side of \eqref{eq:Y-exact} is nonnegative, \(Y\) is nondecreasing as long as the classical solution exists, whence
\begin{equation}
Y(t)\geq Y(0)>0 .
\label{eq:Y-positive}
\end{equation}
By \cref{lem:Q-positive}, \(Q(x_0-t,t)\geq0\), and therefore
\begin{equation}
\frac{Y(t)+Q(x_0-t,t)}{2}
\geq
\frac{Y(t)}{2}>0 .
\label{eq:vt-lower}
\end{equation}
Combining \eqref{eq:Y-exact} with \eqref{eq:vt-lower}, we obtain
\begin{equation}
Y'(t)
\geq
2^{-n}Y(t)^{\,m+n} .
\label{eq:Y-ODE-ineq}
\end{equation}

Set \(\alpha=m+n>1\). Dividing \eqref{eq:Y-ODE-ineq} by \(Y^\alpha>0\) gives
\[
\frac{d}{dt}Y(t)^{1-\alpha}
=
(1-\alpha)Y(t)^{-\alpha}Y'(t)
\leq
-(\alpha-1)2^{-n},
\]
and integrating from \(0\) to \(t\),
\begin{equation}
0<Y(t)^{1-\alpha}
\leq
Y(0)^{1-\alpha}
-
(\alpha-1)2^{-n}t .
\label{eq:Y-integrated}
\end{equation}
The right-hand side vanishes at
\[
t_*
=
\frac{2^{n}}{\alpha-1}Y(0)^{1-\alpha}
=
\frac{2^{n}}{m+n-1}
\bigl(\eta P_0^{\max}\bigr)^{-(m+n-1)},
\]
by \eqref{eq:Y0}. If the classical solution existed on \([0,t_*]\), then \(Y\) would be finite and positive there and \eqref{eq:Y-integrated} would force the positive quantity \(Y(t_*)^{1-\alpha}\) to be nonpositive, a contradiction. Hence \(T(\eta)\leq t_*\), which is \eqref{eq:upper-bound}.
\end{proof}

\begin{remark}\label{rem:upper}
The upper bound is obtained entirely from the characteristic equations. No compactness argument and no energy estimate is used, and the constant is explicit. The hypothesis \(Q_0\geq0\) ensures that \(Q\) remains nonnegative, so that along the selected characteristic the velocity
\[
v_t=\frac{P+Q}{2}
\]
dominates half of the growing component \(P\). This is the amplification mechanism behind the estimate: it converts the exact equation \eqref{eq:Y-exact}, in which two different quantities appear, into the closed scalar inequality \eqref{eq:Y-ODE-ineq}.
\end{remark}

\section{The lifespan estimate and its hypotheses}\label{sec:sharp}

We now combine the lower and upper estimates.

\begin{theorem}[Lifespan]\label{thm:sharp-lifespan}
Let \(m>1\) and \(n>1\), and let
\[
\phi\in C_0^2(\R),
\qquad
\psi\in C_0^1(\R),
\qquad
(\phi,\psi)\not\equiv(0,0).
\]
Assume the one-sided condition
\begin{equation}
\psi(x)-\phi'(x)\geq0
\qquad\text{for every }x\in\R .
\label{eq:sharp-Q}
\end{equation}
Let \(T(\eta)\) be the maximal existence time of the classical solution to \eqref{eq:main}, which is well defined by \cref{lem:uniqueness}. Then, with the explicit constants \(c\) of \eqref{eq:c-explicit} and \(C\) of \eqref{eq:upper-bound},
\begin{equation}
c\,\eta^{-(m+n-1)}
\leq
T(\eta)
\leq
C\,\eta^{-(m+n-1)}
\qquad\text{for every }\eta>0 .
\label{eq:sharp-final}
\end{equation}
In particular \(T(\eta)\asymp\eta^{-(m+n-1)}\) as \(\eta\to0^+\).
\end{theorem}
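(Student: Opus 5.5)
The theorem is a direct assembly of the two one-sided estimates already proved, so the plan is simply to invoke them in turn and check that their hypotheses are met and their constants match those named in the statement.

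\emph{Lower bound.} First, I will apply \cref{prop:lower-lifespan}. Its hypotheses are only \(m>1\), \(n>1\), the regularity \(\phi\in C_0^2(\R)\), \(\psi\in C_0^1(\R)\), and nontriviality of the data, all of which are assumed in \cref{thm:sharp-lifespan}. No sign condition is needed there (see \cref{rem:lower}). It gives, for every \(\eta>0\), a classical solution on each interval \([0,T]\) with \(T\le c\eta^{-(m+n-1)}\), where \(c\) is the explicit constant of \eqref{eq:c-explicit}. By \cref{lem:uniqueness} these solutions coincide with the maximal one, so \(T(\eta)\ge c\eta^{-(m+n-1)}\).

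\emph{Upper bound.} Second, I will apply \cref{prop:upper-lifespan}. Its hypothesis \eqref{eq:blowup-hypothesis} is exactly \eqref{eq:sharp-Q} together with nontriviality. The needed positivity \(P_0^{\max}>0\) is supplied by \cref{lem:activation}, so no extra assumption enters. The proposition yields \(T(\eta)\le C\eta^{-(m+n-1)}\) with \(C=\frac{2^n}{m+n-1}(P_0^{\max})^{-(m+n-1)}\), which is finite and independent of \(\eta\).

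\emph{Conclusion.} Combining the two inequalities gives \eqref{eq:sharp-final} for every \(\eta>0\). The final assertion \(T(\eta)\asymp\eta^{-(m+n-1)}\) follows because \(c\) and \(C\) depend only on \(m\), \(n\), \(\phi\) and \(\psi\). It is worth noting in passing that the two bounds are mutually consistent, \(c\le C\), since both estimate the same finite quantity \(T(\eta)\).

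There is no genuine obstacle. The only point needing care is confirming that the maximal time \(T(\eta)\) used in both propositions is the same object, which is guaranteed by \cref{lem:uniqueness}.
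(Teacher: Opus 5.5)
Your proposal is correct and is exactly the paper's proof: the lower bound is \cref{prop:lower-lifespan}, which uses no sign condition and no smallness, and the upper bound is \cref{prop:upper-lifespan}, whose hypothesis is \eqref{eq:sharp-Q} plus nontriviality, with \cref{lem:activation} supplying \(P_0^{\max}>0\). Your remarks on \cref{lem:uniqueness} identifying the maximal time and on the consistency \(c\le C\) are harmless additions.
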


\begin{proof}
The lower estimate is \cref{prop:lower-lifespan}, which requires neither \eqref{eq:sharp-Q} nor any smallness of \(\eta\). Under \eqref{eq:sharp-Q} and the nontriviality of the data, \cref{prop:upper-lifespan} applies and gives the upper estimate. Together they imply \eqref{eq:sharp-final}.
\end{proof}

\begin{example}\label{ex:data}
Admissible data are abundant. Let \(\phi\in C_0^2(\R)\) be arbitrary, let \(h\in C_0^1(\R)\) satisfy \(h\geq0\) and \(h\not\equiv0\), and set
\[
\psi=\phi'+h .
\]
Then \(\psi\in C_0^1(\R)\), and \(Q_0=h\geq0\), so \eqref{eq:sharp-Q} holds and the data are nontrivial. \Cref{thm:sharp-lifespan} therefore applies, and \cref{lem:activation} guarantees that \(P_0=2\phi'+h\) is positive somewhere, even though this is not evident from the formula. The simplest instance is \(\phi\equiv0\) and \(\psi=h\), for which \(P_0=Q_0=h\) and
\[
\min\left\{\frac{3D_0}{2\kappa_0M^{m+n}},\ \frac{1}{4\kappa_1M^{m+n-1}}\right\}\eta^{-(m+n-1)}
\ \leq\ T(\eta)\ \leq\
\frac{2^{n}}{m+n-1}\,\|h\|_\infty^{-(m+n-1)}\,\eta^{-(m+n-1)} ,
\]
with \(D_0=2(\|h\|_\infty+\|h'\|_\infty)\) and \(M=4D_0\).
\end{example}

The hypotheses are read most naturally in characteristic variables. Condition \eqref{eq:sharp-Q} says that the initial value of \(Q=v_t-v_x\) is nonnegative; this sign propagates, by \cref{lem:Q-positive}, because \(Q\) satisfies
\[
(\partial_t+\partial_x)Q
=
|P|^m\left|\frac{P+Q}{2}\right|^n
\geq0 .
\]
It is the only structural assumption of \cref{thm:sharp-lifespan}: by \cref{lem:activation} it already forces the other characteristic component \(P=v_t+v_x\) to be positive somewhere, and along the left-going characteristic through such a point \(P\) obeys a superlinear differential inequality. Thus, for compactly supported nontrivial data, no separate activation hypothesis on \(P_0\) is required. This may be contrasted with the purely characteristic model \(|v_t\pm v_x|^{p-1}(v_t\pm v_x)\) of \cite{HaruyamaSasakiTakamura2025}, where the corresponding activation condition \(\psi\pm\phi'\not\equiv0\) is imposed as a hypothesis; it cannot be removed there, since its failure produces exactly the global free wave of the cancellation regime.

The assumption does not exclude global solutions for the equation itself. If \(\psi+\phi'\equiv0\), then \(P\) vanishes identically for the corresponding free wave, the nonlinear source is inactive, and
\[
v(x,t)=\eta\phi(x-t)
\]
exists for all time. By \cref{cor:disjoint} such data satisfy \eqref{eq:sharp-Q} only when they vanish identically, so the two regimes do not overlap. The model therefore separates two behaviours: characteristic cancellation gives a global travelling wave, whereas the sign condition \eqref{eq:sharp-Q} produces finite-time amplification on the scale \(T(\eta)\asymp\eta^{-(m+n-1)}\).

\begin{remark}\label{rem:optimality}
The exponent \(m+n-1\) is that of the balance
\[
T\eta^{m+n}\sim\eta
\]
between the Duhamel contribution of the nonlinear source to the first derivatives of the solution and the size of those derivatives at \(t=0\). The lower estimate shows that the solution remains controlled up to this scale without any sign condition on the data, and the upper estimate shows that the same scale is attained under \eqref{eq:sharp-Q}. What is not determined here is the constant: the ratio \(C/c\) produced by the two arguments is large, and identifying \(\lim_{\eta\to0^+}\eta^{m+n-1}T(\eta)\), if it exists, in terms of the profiles \(\phi\) and \(\psi\) remains open. Two further questions are what happens when \eqref{eq:sharp-Q} is dropped, in which case the source need not admit a lower bound in terms of \(P\) alone along the selected characteristic, and whether the blow-up mechanism identified here is compatible with a description of the blow-up curve in the spirit of \cite{Sasaki2018}.
\end{remark}

\section{Conclusion}

We have studied the one-dimensional semilinear wave equation
\[
v_{tt}-v_{xx}=|v_t+v_x|^m|v_t|^n ,
\]
whose source couples a characteristic derivative with the physical time derivative, so that the nonlinear response depends both on the local wave velocity and on propagation along one selected characteristic family. Writing the problem in the characteristic variables \(P=v_t+v_x\) and \(Q=v_t-v_x\) turns it into a first-order system for which two matching estimates are available: a fixed-point construction along characteristics gives the lower lifespan bound with explicit dependence on \(\eta\) and no sign restriction on the data, while under the single condition \(\psi\geq\phi'\) the component \(Q\) stays nonnegative and \(P\) obeys a scalar superlinear inequality along a suitable characteristic, giving the matching upper bound. The resulting lifespan is
\[
T(\eta)\asymp\eta^{-(m+n-1)},
\]
with an exponent fixed by the total degree \(m+n\) of the source. The complementary condition \(\psi+\phi'\equiv0\) makes the characteristic factor vanish and produces the global travelling wave \(v(x,t)=\eta\phi(x-t)\); the two regimes are disjoint except at the trivial datum.

\end{document}